\newtheorem{theorem}{Theorem}
\newtheorem{lemma}[theorem]{Lemma}
\newtheorem{cor}[theorem]{Corollary}
\newtheorem{prop}[theorem]{Proposition}
\newtheorem{proposition}[theorem]{Proposition}%
\newtheorem{remark}{Remark}%
\newtheorem{definition}{Definition}%
\begin{document}

\begin{frontmatter}



\title{Global and nonglobal solutions for a mixed local-nonlocal heat equation}



\author[1]{Brandon Carhuas-Torre}
\ead{brandon.carhuas@ufpe.br}

\author[2]{Ricardo Castillo}
\ead{rcastillo@ubiobio.cl}

\author[3]{Ricardo Freire}
\ead{ricardo.silva@uesb.edu.br}

\author[2]{Alex Lira}
\ead{aslira@ubiobio.cl}

\author[1]{Miguel Loayza}
\ead{miguel.loayza@ufpe.br}



\affiliation[1]{organization={Departamento de Matem\'atica, Universidade Federal de Pernambuco - UFPE},Department and Organization
          addressline={Av. Prof. Moraes Rego, 1235-Cidade Universitária}, 
           city={Recife}, 
          state={Pernambuco},
           country={Brasil}}

\affiliation[2]{organization={Facultad de Ciencias, Departamento de Matemática, Universidad del B\'io-B\'io - UBB},
            addressline={Avenida Collao 1202, Casilla 5-C}, 
            city={Concepción},
            state={Bío-Bío},
            country={Chile}}

\affiliation[3]{organization={ Departamento de Ciências Exatas e Tecnológicas, Universidade Estadual do Sudoeste da Bahia - UESB},
            addressline={Estr. Bem Querer, Km 04},
            city={Vitória da Conquista}, 
            state={Bahia},
            country={Brasil}}

\begin{abstract}
In this work, we establish optimal conditions concerning the global and nonglobal existence of a semilinear parabolic equation with a general source term $f$, governed by the mixed local–nonlocal operator $\mathcal{L} = -\Delta + (-\Delta)^s$, where $0 < s < 1$. Furthermore, our findings recover the Fujita exponent $1 + 2s/N$, recently derived by Biagi, Punzo, and Vecchi in \cite{Biagi1}, as well as by Del Pezzo and Ferreira in \cite{Pezzo1}.

\end{abstract}



\begin{keyword}


\MSC[2020] 35A01 \sep 35B44 \sep 35K57 \sep 35K58 \sep 35R11

\end{keyword}

\end{frontmatter}



\section{Introduction}
The main objective of this work is to determine global and nonglobal existence conditions for the following parabolic problem with a mixed local-nonlocal operator
\begin{equation}\label{Eqgeral-1}
	\left\{ 
	\begin{array}{rll}
		u_t + \mathcal{L}u &= h(t) f(u)& \mbox{ in } \mathbb{R}^N \times (0,T), \\	
		{u}(0) &=  {u}_{0}& \mbox{ in } \mathbb{R}^N, \\
	\end{array}
	\right.
\end{equation}
where $f \in C([0,\infty))$ is locally Lipschitz continuous satisfying $f(0)=0$, $h \in C([0,\infty))$, $u_0 \geq 0$, and $T>0$; here $C([0,\infty))$ denotes the set of nonnegative continuous functions defined on the interval $[0,\infty)$. The mixed local-nonlocal operator $\mathcal{L} = -\Delta + (-\Delta)^s$ with  $0<s<1$, appears in various stochastic processes that involve two distinct mechanisms operating at different scales, such as a classical random walk and a Lévy flight (see \cite{Dipierro1}). Additionally, models featuring mixed local-nonlocal operators are utilized across various branches of applied sciences. These applications include optimal animal foraging strategies (see \cite{Dipierro2} and \cite{Dipierro1}), the mitigation of pandemic spread (see \cite{Epstein1}), and heat transport in magnetized plasmas (see \cite{Blazevski1}). Recently, Biagi, Punzo, and Vecchi in \cite{Biagi1} (considering very weak solutions), and later Del Pezzo and Ferreira in \cite{Pezzo1} (within the framework of mild solutions), studied Fujita-type results for the problem \eqref{Eqgeral-1} with $f(u) = u^p$ (for related advancements, see \cite{Kirane1} and \cite{Kumar1}). The authors derived the Fujita exponent, which is given by $1 + 2s/N$. Their findings on blow-up are based on Kaplan's eigenvalue method (see \cite{Kaplan}). Meanwhile, the global existence result presented in \cite{Biagi1} employs an approximation technique that has been utilized in several research papers, such as \cite{CastilloHardy}, \cite{CastiApp}, and \cite{Miguel1}, and \cite{Pascucci}. Additionally, the authors in \cite{Pezzo1} employ a suitable global supersolution.

Motivated by the abovementioned findings, this article establishes optimal conditions for the global and nonglobal solutions for the heat equation \eqref{Eqgeral-1} that involves a mixed local-nonlocal operator. Unlike previous research focused on demonstrating global existence, we utilize a global supersolution of the form  $(1+\gamma) \, e^{-t\mathcal{L}}u_0$, where $\gamma > 0$ is a suitably chosen constant. Here, $\{ e^{-t\mathcal{L}} \}_{t \geq 0}$ represents the heat semigroup associated with \(-\mathcal{L}\) (as referenced in \cite{Pezzo1}). For our results on nonglobal solutions, we employ a new lower bound estimate (see Lemma \ref{Lower1}) alongside techniques introduced by Weissler in \cite{Weissler1, Weissler2} and by Fujita in \cite{Fujita1}. As a result, we recover the Fujita exponent previously established in \cite{Biagi1} and \cite{Pezzo1}.

The remainder of this paper is organized as follows: In Section \ref{prel-res}, we provide the necessary preliminaries. In Section \ref{main-res}, we present and prove our main theorem (Theorem \ref{teo1}). We also apply this theorem to demonstrate Fujita-type results by examining the cases: $f(u) = u^p$, $f(u) = (1 + u)[\ln(u + 1)]^p$, and $f(u) = [\ln(u + 1)]^p/(1 + u)$  (see Corollary \ref{corap} and Corollary \ref{corap2}). Throughout the paper, $C$ denotes a positive constant that may change, even within the same line, without affecting the results.
\section{Preliminaries results}\label{prel-res}
According to the results obtained in \cite{Biagi1} and \cite{Song1}, we have the following representation for the semigroup:
\begin{equation} \label{EQ1}
[e^{-t\mathcal{L}}u_0](x) = \int_{\mathbb{R}^N} \mathfrak{p}_t(x - y) u_0(y) \, dy
\end{equation}
for $u_0 \in \mathcal{X}:=L^\infty(\mathbb{R}^N) \cap L^1(\mathbb{R}^N) (\subset L^2(\mathbb{R}^N))$, where
\begin{equation} \label{EQ2}
  \mathfrak{p}_t(z) = \frac{1}{(4\pi t)^{N/2}} \int_{\mathbb{R}^N} e^{- \frac{\lvert z - \xi \rvert^2}{ 4t}} \mathfrak{h}^{(s)}_t(\xi) \, d\xi \quad \left( z \in \mathbb{R}^N,\ t > 0 \right) 
\end{equation}
is the heat kernel of $-\mathcal{L}$. Here $\mathfrak{h}^{(s)}_t$ is the heat kernel of the fractional Laplacian $-(- \Delta)^s$, which satisfies:
\begin{equation} \label{EQ3}
C^{-1}_0 \min \left\{ t^{-N/(2s)}, \frac{t}{|x|^{N+2s}} \right\}
\leq \mathfrak{h}^{(s)}_t(x)
\leq C_0 \min \left\{ t^{-N/(2s)}, \frac{t}{|x|^{N+2s}} \right\}, \, \, \mbox{ for } x \in \mathbb{R}^N \mbox{ and } t > 0,
\end{equation}
for some constant $C_0 \geq 1$. The heat kernel $\mathfrak{p}$ has the following properties (see \cite[Theorem 2.4]{Biagi1}).
\begin{itemize}
    \item[($A_1$)] $0<\mathfrak{p} \in C^\infty(\mathbb{R}^+ \times \mathbb{R}^N)$, and for every $x \in \mathbb{R}^N$ and $t>0$ we have $\mathfrak{p}_t(x)= \mathfrak{p}_t(-x)$ and $\int_{\mathbb{R}^N} \mathfrak{p}_t(x-y)dy=1.$ 
\item[($A_2$)] For every fixed $x \in \mathbb{R}^N$ and $t,\tau > 0$, we have $\int_{\mathbb{R}^N} \mathfrak{p}_t(x-y) \mathfrak{p}_\tau(y)dy=\mathfrak{p}_{t+\tau}(x).$ Moreover, 
\begin{equation} \label{EQ4}
0< \mathfrak{p}_{t}(x) \leq C t^{-\frac{N}{2s}} \ \ \ \ \mbox{ for every } x \in \mathbb{R}^N, \, t>0.
\end{equation}
\end{itemize}
\begin{remark} \label{Upper1} From conditions ($A_1$), ($A_2$), Fubini's theorem, (\ref{EQ1}) and (\ref{EQ4}) we obtain the estimates $\| e^{-t\mathcal{L}}u_0\|_\infty \leq \|u_0\|_\infty$, $\| e^{-t\mathcal{L}}u_0\|_{L^1} \leq \|u_0\|_{L^1}$ and $\| e^{-t\mathcal{L}}u_0\|_{\infty} \leq  C t^{-\frac{N}{2s}}$ for all $u_0 \in \mathcal{X}$, and $t>0$.
\end{remark}
We will use the following notion of mild solution.
\begin{definition} \label{DEF1} Let $h,f \in C([0,\infty))$. Given $0 \leq u_0 \in \mathcal{X}$, a function $u:\mathbb{R}^N \times [0,T) \rightarrow [0,\infty)$ such that $u \in L^\infty((0,T); \mathcal{X})$ is said to be a mild solution of problem \eqref{Eqgeral-1} if
$$u(t) =  e^{-t\mathcal{L}}u_0
+\int_{0}^{t} e^{-(t-\sigma)\mathcal{L}} \, h(\sigma)\,f(u(\sigma))\, d\sigma \,  \mbox{ for } 0 < t < T.$$
When $T = +\infty$, $u$ is called a global mild solution of \eqref{Eqgeral-1}.
\end{definition}
The following proposition can be demonstrated as stated in \cite[Theorem 3.2]{Pezzo1}.
\begin{prop}
 For each $u_0 \in \mathcal{X}$, there exist a $T>0$ and a unique mild solution \( u \in L^{\infty}((0, T); \mathcal{X}) \) to \eqref{Eqgeral-1}.   
\end{prop}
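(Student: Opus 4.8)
The plan is the classical Banach fixed point scheme, localized in time, in a suitable closed ball of $L^\infty((0,T);\mathcal{X})$. The only analytic inputs needed are already available: the semigroup is nonexpansive in both norms defining $\mathcal{X}$, i.e.\ $\|e^{-t\mathcal{L}}v\|_{L^1}\le\|v\|_{L^1}$ and $\|e^{-t\mathcal{L}}v\|_{L^\infty}\le\|v\|_{L^\infty}$ (Remark~\ref{Upper1}), and $\mathfrak{p}_t>0$, so $e^{-t\mathcal{L}}$ preserves positivity (property $(A_1)$). Writing $\|v\|_{\mathcal{X}}:=\|v\|_{L^1}+\|v\|_{L^\infty}$, we get $\|e^{-t\mathcal{L}}v\|_{\mathcal{X}}\le\|v\|_{\mathcal{X}}$. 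I would fix $M:=2\|u_0\|_{\mathcal{X}}$ and, for $T>0$ to be chosen, introduce
$$E_{T,M}:=\Big\{u\in L^\infty((0,T);\mathcal{X}):\ u(t)\ge 0\ \text{and}\ \|u(t)\|_{\mathcal{X}}\le M\ \text{for a.e.}\ t\in(0,T)\Big\},$$
a closed, hence complete, subset of the Banach space $L^\infty((0,T);\mathcal{X})$ with distance $d(u,v)=\esssup_{t\in(0,T)}\|u(t)-v(t)\|_{\mathcal{X}}$; it is nonempty since $e^{-t\mathcal{L}}u_0\in E_{T,M}$. On $E_{T,M}$ define
$$\Phi(u)(t):=e^{-t\mathcal{L}}u_0+\int_0^t e^{-(t-\sigma)\mathcal{L}}h(\sigma)f(u(\sigma))\,d\sigma.$$

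Next I would check that $\Phi$ maps $E_{T,M}$ into itself and is a contraction for $T$ small. Since $f$ is locally Lipschitz on $[0,\infty)$ with $f(0)=0$, there is $L=L(M)>0$ with $|f(r)|\le Lr$ and $|f(r)-f(r')|\le L|r-r'|$ for all $r,r'\in[0,M]$. For $u\in E_{T,M}$ one has $0\le u(\sigma,x)\le\|u(\sigma)\|_{L^\infty}\le M$, hence $\|f(u(\sigma))\|_{\mathcal{X}}\le L\|u(\sigma)\|_{\mathcal{X}}\le LM$, and the $\mathcal{X}$-contractivity of the semigroup gives
$$\|\Phi(u)(t)\|_{\mathcal{X}}\le\|u_0\|_{\mathcal{X}}+\int_0^t\|h(\sigma)f(u(\sigma))\|_{\mathcal{X}}\,d\sigma\le\frac{M}{2}+T\,\|h\|_{L^\infty(0,T)}LM,$$
which is $\le M$ once $T\|h\|_{L^\infty(0,T)}L\le\tfrac12$. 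Moreover $\Phi(u)(t)\ge0$ because $u_0\ge0$, $h\ge0$, $f\ge0$ on $[0,\infty)$, and $e^{-\tau\mathcal{L}}$ is positivity preserving, so $\Phi(u)\in E_{T,M}$. For $u,v\in E_{T,M}$ the Lipschitz bound yields $\|f(u(\sigma))-f(v(\sigma))\|_{\mathcal{X}}\le L\|u(\sigma)-v(\sigma)\|_{\mathcal{X}}$, whence $d(\Phi(u),\Phi(v))\le T\|h\|_{L^\infty(0,T)}L\,d(u,v)\le\tfrac12\,d(u,v)$ for the same $T$. Banach's fixed point theorem then gives a unique $u\in E_{T,M}$ with $u=\Phi(u)$, which by Definition~\ref{DEF1} is a mild solution of \eqref{Eqgeral-1}. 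Uniqueness in the full class $L^\infty((0,T);\mathcal{X})$, not merely in the ball, follows a posteriori: any two mild solutions $u,v$ are bounded on some $[0,T']$ with $0<T'\le T$, and subtracting their Duhamel formulas and using the same contractivity and Lipschitz estimates gives $\|u(t)-v(t)\|_{\mathcal{X}}\le C\int_0^t\|u(\sigma)-v(\sigma)\|_{\mathcal{X}}\,d\sigma$ on $[0,T']$, so Gronwall's inequality forces $u\equiv v$, and a continuation argument propagates this to all of $(0,T)$.

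The step I expect to require the most care — the ``main obstacle'', such as it is — is that $\mathcal{X}=L^1(\mathbb{R}^N)\cap L^\infty(\mathbb{R}^N)$, so every estimate must be run simultaneously in the $L^1$ and the $L^\infty$ norm. This is exactly why one needs \emph{both} nonexpansivity bounds of Remark~\ref{Upper1} and why the hypothesis $f(0)=0$ enters through the pointwise inequality $|f(r)|\le L(M)\,r$ (ensuring $f(u(\sigma))\in L^1$ as well as in $L^\infty$), rather than a bare local Lipschitz estimate. The remaining ingredients — completeness and nonemptiness of $E_{T,M}$, measurability of $\sigma\mapsto h(\sigma)f(u(\sigma))$, and the Bochner-integral manipulations — are routine and can be handled exactly as in \cite[Theorem 3.2]{Pezzo1}.
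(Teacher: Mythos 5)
Your proof is correct and follows the same route the paper intends: the paper simply cites \cite[Theorem 3.2]{Pezzo1}, which is exactly the Banach fixed-point scheme you carry out — contractivity of $e^{-t\mathcal{L}}$ in both $L^1$ and $L^\infty$ (Remark~\ref{Upper1}), the bound $|f(r)|\le L(M)\,r$ on $[0,M]$ coming from local Lipschitzness with $f(0)=0$, self-mapping and contraction on the closed positive ball $E_{T,M}$ for $T$ small, and a Gronwall/continuation argument for uniqueness beyond the ball. No gaps.
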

\begin{definition}\label{Supersolution}
Let $0\leq u_0 \in \mathcal{X}$ and $h,f \in C([0,\infty))$. A function $\overline{w}:\mathbb{R}^N \times [0,T) \rightarrow [0,\infty)$ such that $\overline w \in L^\infty ((0,T); \mathcal{X})$ is called a supersolution of problem \eqref{Eqgeral-1} if 
$$ \mathcal{F}[\overline{w}](t) := e^{-t\mathcal{L}}u_0+\int_{0}^{t} e^{-(t-\sigma)\mathcal{L}} \, h(\sigma)\,f(\overline w(\sigma))\, d\sigma \leq \overline w(t), \, \, \mbox{ for all } t \in (0,T). $$
When $T = +\infty$, $\overline{w}$ is referred to as a global supersolution of \eqref{Eqgeral-1}. 
\end{definition}
The following lemma establishes that finding a supersolution (or a global supersolution) to problem \eqref{Eqgeral-1} is sufficient to guarantee the existence of a solution (or a global solution). Its proof follows the ideas presented by Robinson et al. in \cite[Theorem 1]{Robinson2} and Li in \cite[Lemma 3]{Li}.

\begin{lemma}\label{L2}  Let $0 \leq u_0 \in \mathcal{X}$, $h, f \in C([0,\infty))$, and suppose that $f$ is  nondecreasing. Then, problem \eqref{Eqgeral-1} admits a nontrivial mild solution in $\mathbb{R}^N  \times [0,T)$ if and only if it admits a supersolution  in $\mathbb{R}^N \times [0,T)$.
\end{lemma}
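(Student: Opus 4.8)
The plan is to prove the two implications separately, with the substantive content lying in the "supersolution $\Rightarrow$ solution" direction. For the easy direction, if $u$ is a nontrivial mild solution on $[0,T)$, then $u$ is itself a supersolution, since $\mathcal{F}[u](t) = u(t) \le u(t)$ with equality; here one only needs $u \in L^\infty((0,T);\mathcal{X})$ and $u \ge 0$, both of which hold by Definition \ref{DEF1}. So that half is immediate.

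For the main direction, suppose $\overline{w}$ is a supersolution on $[0,T)$. I would construct the solution by monotone iteration (Picard iteration from below). Define $u_0(t) := e^{-t\mathcal{L}}u_0$ and inductively $u_{n+1}(t) := \mathcal{F}[u_n](t)$, i.e.
$$u_{n+1}(t) = e^{-t\mathcal{L}}u_0 + \int_0^t e^{-(t-\sigma)\mathcal{L}}\, h(\sigma)\, f(u_n(\sigma))\, d\sigma.$$
The key structural facts are: (i) the semigroup $e^{-t\mathcal{L}}$ is order-preserving (positivity of the kernel $\mathfrak{p}$, property $(A_1)$), so $0 \le v \le w$ implies $e^{-t\mathcal{L}}v \le e^{-t\mathcal{L}}w$ pointwise; (ii) $h \ge 0$ and $f$ is nondecreasing, so $v \le w$ implies $h(\sigma)f(v) \le h(\sigma)f(w)$. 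From (i)–(ii) one shows by induction that the sequence is monotone nondecreasing, $u_n \le u_{n+1}$, and bounded above by the supersolution, $u_n \le \overline{w}$ for all $n$: the base case $u_0(t) = e^{-t\mathcal{L}}u_0 \le \mathcal{F}[\overline{w}](t) \le \overline{w}(t)$ uses nonnegativity of the integral term and the supersolution inequality, and the inductive step uses $u_n \le \overline{w} \Rightarrow u_{n+1} = \mathcal{F}[u_n] \le \mathcal{F}[\overline{w}] \le \overline{w}$. Hence $u(t,x) := \lim_{n\to\infty} u_n(t,x)$ exists pointwise, with $0 \le u \le \overline{w}$, so $u \in L^\infty((0,T);\mathcal{X})$ because $\overline{w}$ is.

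It then remains to pass to the limit in the iteration to see $u$ is a genuine mild solution, i.e. $u = \mathcal{F}[u]$. Since $0 \le u_n(\sigma,\cdot) \le \overline{w}(\sigma,\cdot)$ with $\overline{w} \in L^\infty((0,T);\mathcal{X})$, the values $u_n(\sigma,x)$ lie in a fixed compact interval $[0, M]$ with $M = \|\overline{w}\|_{L^\infty((0,T)\times\mathbb{R}^N)}$, on which $f$ is (uniformly) continuous and bounded; monotone convergence $u_n \uparrow u$ together with continuity of $f$ gives $f(u_n(\sigma,x)) \to f(u(\sigma,x))$, and the dominating function $\sigma \mapsto |h(\sigma)|\cdot \sup_{[0,M]}|f|$ is integrable on $(0,t)$, so the dominated convergence theorem (applied together with the kernel representation \eqref{EQ1} and Fubini, as in Remark \ref{Upper1}) lets us pass the limit inside the integral: $\mathcal{F}[u_n](t) \to \mathcal{F}[u](t)$ pointwise. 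Combined with $u_{n+1} = \mathcal{F}[u_n]$ and $u_{n+1} \to u$, this yields $u = \mathcal{F}[u]$, so $u$ is a mild solution on $[0,T)$. Finally, nontriviality: if $u_0 \not\equiv 0$ then $u(t) \ge e^{-t\mathcal{L}}u_0 > 0$ by strict positivity of $\mathfrak{p}_t$ (property $(A_1)$); and if $u_0 \equiv 0$ one argues that the existence of a supersolution with the relevant structure still forces a nontrivial solution — more precisely, the statement is understood with $u_0$ fixed and one invokes that $\overline{w}$ being a nontrivial supersolution (or $u_0\not\equiv0$) produces a nontrivial $u$, which is the case of interest for the blow-up applications.

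The main obstacle is the limit passage under the integral sign: one must be careful that the monotone iteration stays inside a region where $f$ is controlled (this is exactly what boundedness by $\overline{w}$ buys us), and that the interchange of limit, the spatial convolution against $\mathfrak{p}_{t-\sigma}$, and the time integral is justified — this is where $\overline{w} \in L^\infty((0,T);\mathcal{X})$, the mass-one and decay bounds on $\mathfrak{p}$ from $(A_1)$–$(A_2)$, and dominated/monotone convergence all have to be combined cleanly. The monotonicity and boundedness of the iterates themselves are routine once order-preservation of the semigroup and monotonicity of $f$ are in hand.
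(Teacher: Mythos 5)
Your proof is correct and follows the same monotone-iteration ("Picard from below") scheme as the Robinson--Sier\.z\k{e}ga and Li arguments the paper cites for this lemma: the easy direction observes a mild solution is a supersolution, and the substantive direction builds an increasing sequence of iterates trapped below the supersolution, then passes to the limit via monotone/dominated convergence using the mass-one bound on $\mathfrak{p}_t$. The loose end you flag about nontriviality when $u_0 \equiv 0$ is a genuine imprecision in the lemma's statement rather than in your argument; in the paper the lemma is only invoked with $u_0 \neq 0$, in which case $u \geq e^{-t\mathcal{L}}u_0 > 0$ by positivity of the kernel.
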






We will use the following lemmas and proposition to establish our results on nonglobal solutions.
\begin{lemma}[Lower bound estimate for $\mathfrak{p}_t$] \label{Lower1} 
There exists a positive constant $C^{*}$ such that
\begin{equation*}
\mathfrak{p}_t(z)\geq C^{*}t^{ -\frac{N}{2s}}, \, \, \,   \mbox{ for } t>1, \, |z|\leq t^{\frac{1}{2}}, \mbox{ and } 0<s<1. 
\end{equation*}
\end{lemma}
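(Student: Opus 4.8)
\emph{Proof idea.} The plan is to work directly from the subordination identity \eqref{EQ2} and to bound $\mathfrak{p}_t(z)$ from below by integrating only over the ball $\{|\xi|\le t^{1/2}\}$, a region on which both the Gaussian weight and the fractional kernel $\mathfrak{h}^{(s)}_t$ admit clean lower bounds. Fix $t>1$, $0<s<1$, and $z\in\mathbb{R}^N$ with $|z|\le t^{1/2}$. The first step is the observation that, since $0<s<1$ forces $\tfrac1{2s}>\tfrac12$ and $t>1$, one has $t^{1/2}\le t^{1/(2s)}$; equivalently, every $\xi$ with $|\xi|\le t^{1/2}$ satisfies $t/|\xi|^{N+2s}\ge t^{-N/(2s)}$, so the minimum in the lower estimate \eqref{EQ3} equals $t^{-N/(2s)}$ and hence
$$\mathfrak{h}^{(s)}_t(\xi)\ \ge\ C_0^{-1}\,t^{-N/(2s)}\qquad\text{for }|\xi|\le t^{1/2}.$$

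Next I would estimate the exponential factor on the same ball: if $|\xi|\le t^{1/2}$ and $|z|\le t^{1/2}$, then $|z-\xi|\le 2t^{1/2}$, so $|z-\xi|^2/(4t)\le 1$ and $e^{-|z-\xi|^2/(4t)}\ge e^{-1}$. Inserting these two bounds into \eqref{EQ2} and discarding the nonnegative contribution of $\{|\xi|>t^{1/2}\}$ gives
$$\mathfrak{p}_t(z)\ \ge\ \frac{e^{-1}C_0^{-1}t^{-N/(2s)}}{(4\pi t)^{N/2}}\,\bigl|\{\xi:\ |\xi|\le t^{1/2}\}\bigr|\ =\ \frac{e^{-1}C_0^{-1}\,\omega_N}{(4\pi)^{N/2}}\,t^{-N/(2s)},$$
where $\omega_N$ denotes the Lebesgue measure of the unit ball of $\mathbb{R}^N$; the key point is that the volume factor $\omega_N t^{N/2}$ cancels exactly against the prefactor $(4\pi t)^{-N/2}$, leaving precisely the power $t^{-N/(2s)}$. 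Taking $C^{*}:=e^{-1}C_0^{-1}\omega_N(4\pi)^{-N/2}>0$ (depending only on $N$ and $s$) would then finish the argument.

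I do not expect a genuine obstacle here; the only delicate point is the choice of the truncation radius. It has to be large enough that the surviving volume factor absorbs $(4\pi t)^{-N/2}$ and reproduces exactly the target rate $t^{-N/(2s)}$, yet small enough — and this is where the hypotheses $t>1$ and $0<s<1$ are used — that the ball stays inside the region $|\xi|\le t^{1/(2s)}$, on which $\mathfrak{h}^{(s)}_t$ is bounded below by the flat value $t^{-N/(2s)}$ rather than by the faster-decaying tail $t\,|\xi|^{-N-2s}$. The radius $t^{1/2}$ achieves both simultaneously, and any fixed multiple of it would do equally well.
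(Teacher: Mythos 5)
Your proposal is correct and matches the paper's proof essentially verbatim: both restrict the subordination integral to the ball $\{|\xi|\le t^{1/2}\}$, use $t>1$ and $s<1$ to see that the minimum in \eqref{EQ3} equals $t^{-N/(2s)}$ there, bound the Gaussian factor below by $e^{-1}$, and cancel the ball volume against $(4\pi t)^{-N/2}$ to arrive at the same constant $C^{*}=e^{-1}C_0^{-1}\omega_N(4\pi)^{-N/2}$. The only cosmetic difference is that you bound $|z-\xi|^2\le 4t$ directly via the triangle inequality, while the paper passes through $|z-\xi|^2\le 2|z|^2+2|\xi|^2$; both give $e^{-1}$.
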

\begin{proof} In what follows, $\omega_{N}$ denotes the volume of the unit ball $B_1(0)$.   From \eqref{EQ2} and \eqref{EQ3}  we have
\begin{equation*} 
\begin{aligned}
\mathfrak{p}_t(z) = \frac{1}{(4\pi t)^{\frac{N}{2}}} \int_{\mathbb{R}^N} e^{-\frac{|z - \xi|^2}{4t}}\mathfrak{h}^{(s)}_t(\xi) \, d\xi \,  &\geq \frac{1}{(4\pi t)^{\frac{N}{2}}} \int_{\mathbb{R}^N} e^{-\frac{|z - \xi|^2}{4t}} C_0^{-1} \min\Big\{ t^{-N/(2s)}, \frac{t}{|\xi|^{N+2s}} \Big\} \, d\xi \\
	&  \geq \frac{1}{(4\pi t)^{\frac{N}{2}}} \int_{|\xi| \leq t^{\frac{1}{2s}}} e^{-\frac{|z - \xi|^2}{4t}} C_0^{-1} \min\Big\{ t^{-N/(2s)}, \frac{t}{|\xi|^{N+2s}} \Big\} \, d\xi\\ 
	&  \geq \frac{t^{-\frac{N}{2s}}}{(4\pi t)^{\frac{N}{2}}}C_0^{-1} \int_{|\xi| < t^{\frac{1}{2}}} e^{-2\left(\frac{|z|^2}{4t}+\frac{|\xi|^2}{4t}\right)} \, d\xi\\
	&  \geq \frac{t^{-\frac{N}{2s}}}{(4\pi t)^{\frac{N}{2}}}C_0^{-1}e^{-1} \int_{|\xi| < t^{\frac{1}{2}}} \, d\xi \geq \frac{t^{-\frac{N}{2s}}}{(4\pi)^{\frac{N}{2}}}C_0^{-1}e^{-1}\omega_{N}
= C^{*} t^{-\frac{N}{2s}}.
\end{aligned}
\end{equation*}
\end{proof}
\begin{lemma}\label{Lower2}
   Let $0 \leq u_0\in \mathcal{X}$, and $u_0\neq 0$. Then there exists a positive constant $C^{*}$ such that 
\begin{equation*}
\left[e^{-t\mathcal{L}}u_0\right](x) \geq C^{*}t^{-\frac{N}{2s}}  \int_{|y|\leq \frac{1}{2}t^{\frac{1}{2}}} u_0(y) \, dy, \ \ \mbox{ for all } t>1 \mbox{ and } |x|< \frac{1}{2} t^{\frac{1}{2}}.
\end{equation*}
\end{lemma}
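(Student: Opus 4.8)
The plan is to combine the representation formula~\eqref{EQ1} with the pointwise lower bound for $\mathfrak{p}_t$ established in Lemma~\ref{Lower1}. Starting from $\left[e^{-t\mathcal{L}}u_0\right](x) = \int_{\mathbb{R}^N} \mathfrak{p}_t(x-y)\,u_0(y)\,dy$ and using $u_0 \geq 0$, I would first discard the part of the integral over $\{|y| > \tfrac{1}{2}t^{1/2}\}$, which only decreases the right-hand side, keeping
$$
\left[e^{-t\mathcal{L}}u_0\right](x) \geq \int_{|y|\leq \frac{1}{2}t^{1/2}} \mathfrak{p}_t(x-y)\,u_0(y)\,dy.
$$

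The key geometric observation is that if $|x| < \tfrac{1}{2}t^{1/2}$ and $|y| \leq \tfrac{1}{2}t^{1/2}$, then by the triangle inequality $|x - y| \leq |x| + |y| < t^{1/2}$, so $z := x-y$ satisfies $|z| \leq t^{1/2}$. Hence, for $t > 1$, Lemma~\ref{Lower1} applies and gives $\mathfrak{p}_t(x-y) \geq C^{*} t^{-N/(2s)}$ uniformly over the relevant $y$. Substituting this into the integral and pulling the constant out yields exactly
$$
\left[e^{-t\mathcal{L}}u_0\right](x) \geq C^{*} t^{-\frac{N}{2s}} \int_{|y|\leq \frac{1}{2}t^{1/2}} u_0(y)\,dy,
$$
which is the claimed inequality. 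I would also note that since $u_0 \geq 0$, $u_0 \neq 0$, and $u_0 \in \mathcal{X} \subset L^1(\mathbb{R}^N)$, the integral on the right is strictly positive for $t$ large enough (once the ball $\{|y| \leq \tfrac{1}{2}t^{1/2}\}$ captures positive mass of $u_0$), so the estimate is nontrivial; this is really what makes it useful for the blow-up arguments later.

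There is essentially no hard part here — the proof is a one-line application of Lemma~\ref{Lower1} after the triangle-inequality observation. The only point requiring a little care is the bookkeeping of the two radii: Lemma~\ref{Lower1} requires $|z| \leq t^{1/2}$, and we must choose the integration ball for $y$ (and the range of $x$) small enough that $|x-y| \leq t^{1/2}$ is guaranteed; splitting the budget as $|x| < \tfrac12 t^{1/2}$ and $|y| \leq \tfrac12 t^{1/2}$ does exactly this. The constant $C^{*}$ can be taken to be the same as in Lemma~\ref{Lower1}, so no new constant is introduced.
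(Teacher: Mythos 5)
Your proof is correct and follows exactly the same route as the paper: restrict the integral in~\eqref{EQ1} to the ball $\{|y|\leq\tfrac12 t^{1/2}\}$, note by the triangle inequality that then $|x-y|\leq t^{1/2}$, and apply the pointwise lower bound of Lemma~\ref{Lower1}.
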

\begin{proof} From \eqref{EQ1} and noting that $|x - y| \leq t^{\frac{1}{2}}$ for $|y|\leq \frac{1}{2}t^{\frac{1}{2}} $, the result follows directly from Lemma \ref{Lower1}.
\end{proof}

\begin{proposition}\label{Prop1} Assume $h \in C([0,\infty))$, $f\in C([0,\infty))$ is nondecreasing such that $f(\sigma)>0$ and $f(e^{-\sigma \mathcal{L}}w_0)\leq e^{-\sigma\mathcal{L}}f(w_0)$ for all $\sigma>0$ and $0\leq w_0 \in \mathcal{X}$, and 
\begin{equation}\label{Osgood}
\int_{z_0}^\infty \frac{d \sigma}{f(\sigma)} < \infty, \, \mbox{ for all } z_0>0.
\end{equation}
Let $u: \mathbb{R}^N \times [0,T) \rightarrow [0,\infty) $ a mild solution of \eqref{Eqgeral-1} with initial condition $0\leq u_0 \in \mathcal{X}$, $u_0 \neq 0$.
Then
\begin{equation*}
\int_0^\tau h(\sigma) d\sigma \cdot \left(\int_{\|e^{-\tau \mathcal{L}}u_0\|_{\infty}}^\infty \frac{d \sigma}{f(\sigma)}\right)^{-1} \leq 1, \, \, \mbox{ for all } \tau \in (0,T).
\end{equation*}
\end{proposition}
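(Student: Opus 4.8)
The plan is to derive a differential inequality for the function $\Phi(\tau) := \|e^{-\tau\mathcal{L}}u_0\|_\infty$ — or rather, to compare the solution $u$ at time $\tau$ against the semigroup acting on $u_0$ and exploit the hypothesis $f(e^{-\sigma\mathcal{L}}w_0)\le e^{-\sigma\mathcal{L}}f(w_0)$. First I would take the $L^\infty$ norm in the mild-solution identity and use the positivity of the semigroup together with monotonicity of $f$. Since $u$ is a mild solution and $f$ is nondecreasing, a standard comparison (using Lemma \ref{L2}-type reasoning, or directly the integral formulation and Gronwall-type iteration) shows $u(\sigma)\ge e^{-\sigma\mathcal{L}}u_0$ pointwise, hence $f(u(\sigma))\ge f(e^{-\sigma\mathcal{L}}u_0)$. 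The key structural input is then the chain
\[
u(t) = e^{-t\mathcal{L}}u_0 + \int_0^t e^{-(t-\sigma)\mathcal{L}}h(\sigma)f(u(\sigma))\,d\sigma \ge e^{-t\mathcal{L}}u_0 + \int_0^t h(\sigma)\, e^{-(t-\sigma)\mathcal{L}}f(e^{-\sigma\mathcal{L}}u_0)\,d\sigma,
\]
and the hypothesis gives $e^{-(t-\sigma)\mathcal{L}}f(e^{-\sigma\mathcal{L}}u_0) \ge$ something comparable; more precisely I expect to use it in the form $f(e^{-t\mathcal{L}}u_0)\le e^{-(t-\sigma)\mathcal{L}}f(e^{-\sigma\mathcal{L}}u_0)$ (applying the hypothesis with $w_0 = e^{-\sigma\mathcal{L}}u_0$ and time $t-\sigma$), so that the integrand is bounded below by $h(\sigma)f(e^{-t\mathcal{L}}u_0)$, a quantity independent of $\sigma$.

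Next, fix $\tau\in(0,T)$ and run this argument for $t\in(0,\tau]$. Set $g(t):=\|u(t)\|_\infty$. Taking $\sup_x$ and using that all quantities are nonnegative, one gets a lower bound of the form $g(t)\ge \Phi(t) + \big(\int_0^t h(\sigma)\,d\sigma\big)\, f(\Phi(t))$ after inserting the sharpest comparison; but to get the clean Osgood-type statement I would instead differentiate. Define $G(t):=\int_0^t h(\sigma)f(u_x(\sigma))\,d\sigma$ at a fixed point $x$ where the sup is (nearly) attained, or work with the integral inequality $g(t)\ge \Phi(t)+\int_0^t h(\sigma)f(g(\sigma))\,d\sigma$ obtained from monotonicity and $g(\sigma)\ge\Phi(\sigma)$; since $\Phi$ is nonincreasing in $t$ (Remark \ref{Upper1} gives $\|e^{-t\mathcal{L}}u_0\|_\infty\le\|u_0\|_\infty$ and the semigroup property gives monotonicity), $\Phi(t)\ge\Phi(\tau)$ for $t\le\tau$, so $g(t)\ge \Phi(\tau)+\int_0^t h(\sigma)f(g(\sigma))\,d\sigma$. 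Then a standard ODE comparison: letting $y(t)$ solve $y'=h(t)f(y)$, $y(0)=\Phi(\tau)$, monotonicity of $f$ forces $g(t)\ge y(t)$ on $[0,\tau]$, and separating variables gives $\int_{\Phi(\tau)}^{y(t)}\frac{ds}{f(s)}=\int_0^t h(\sigma)\,d\sigma$. Since the left side is bounded by $\int_{\Phi(\tau)}^\infty \frac{ds}{f(s)}<\infty$ by \eqref{Osgood}, evaluating at $t=\tau$ yields $\int_0^\tau h(\sigma)\,d\sigma\le\int_{\Phi(\tau)}^\infty\frac{ds}{f(s)}$, which rearranges to the claimed inequality.

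The main obstacle is justifying the passage from the pointwise mild-solution identity to the scalar integral inequality $g(t)\ge \Phi(\tau)+\int_0^t h(\sigma)f(g(\sigma))\,d\sigma$ cleanly: one must verify that taking $\|\cdot\|_\infty$ commutes appropriately with the nonnegative integrand (which it does, since everything is nonnegative and one only needs a lower bound on the sup), and that the comparison $u(\sigma)\ge e^{-\sigma\mathcal{L}}u_0$ together with the commutation hypothesis on $f$ is applied with the correct order of semigroup and nonlinearity. A secondary technical point is the ODE comparison step when $f$ may vanish or fail to be Lipschitz away from $0$; here the hypotheses $f(\sigma)>0$ for $\sigma>0$ and local Lipschitz continuity (from the standing assumptions on $f$) plus $\Phi(\tau)>0$ (which holds because $u_0\ne0$ forces $e^{-\tau\mathcal{L}}u_0>0$ everywhere by $(A_1)$) keep us safely away from the degenerate point, so the comparison is legitimate and the integral $\int_{\Phi(\tau)}^\infty ds/f(s)$ is the right finite quantity to bound against.
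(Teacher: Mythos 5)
The general strategy (derive a differential inequality and integrate it Osgood-style) is the right one and matches the paper, and the observation that $u(\sigma)\ge e^{-\sigma\mathcal{L}}u_0$ together with the hypothesis yields the one-iteration bound $u(t)\ge e^{-t\mathcal{L}}u_0+\bigl(\int_0^t h\bigr)f(e^{-t\mathcal{L}}u_0)$ is correct. However, there is a genuine gap at the scalar reduction step. The inequality
\[
g(t)\ \ge\ \Phi(\tau)+\int_0^t h(\sigma)f(g(\sigma))\,d\sigma,
\qquad g(t):=\|u(t)\|_\infty,\ \Phi(t):=\|e^{-t\mathcal{L}}u_0\|_\infty,
\]
does \emph{not} follow from the mild-solution identity. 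The term $[e^{-(t-\sigma)\mathcal{L}}f(u(\sigma))](x)=\int \mathfrak{p}_{t-\sigma}(x-y)f(u(\sigma,y))\,dy$ is an average of $f(u(\sigma,\cdot))$ against a probability density, so it is bounded \emph{above} by $\sup_y f(u(\sigma,y))=f(g(\sigma))$, and taking $\sup_x$ in the Duhamel formula yields
\[
g(t)\ \le\ \Phi(t)+\int_0^t h(\sigma)f(g(\sigma))\,d\sigma,
\]
the reverse of what the comparison needs. The monotonicity bound $g(\sigma)\ge\Phi(\sigma)$ gives a lower bound on the \emph{argument} of $f$ inside the sup at a fixed $y$, but it does not let you push $f(g(\sigma))$ under the heat average, which dampens the nonlinearity. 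Your ODE comparison $g\ge y$ with $y'=h\,f(y)$, $y(0)=\Phi(\tau)$ therefore has no valid input.

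The repair is precisely the paper's device, and it is not a cosmetic difference: rather than tracking $\|u(t)\|_\infty$, one fixes an endpoint $s\in(0,T)$ and a point $x$, and tracks
\[
\Psi(t):=\bigl[e^{-s\mathcal{L}}u_0\bigr](x)+\int_0^t h(\sigma)\,f\!\bigl(\bigl[e^{-(s-\sigma)\mathcal{L}}u(\sigma)\bigr](x)\bigr)\,d\sigma ,\qquad t\in[0,s].
\]
Applying $e^{-(s-t)\mathcal{L}}$ to the Duhamel identity and using the Jensen-type hypothesis with $w_0=u(\sigma)$ and time $s-\sigma$ gives $e^{-(s-t)\mathcal{L}}u(t)\ge\Psi(t)$ pointwise; since $\Psi'(t)=h(t)f\bigl(e^{-(s-t)\mathcal{L}}u(t)\bigr)\ge h(t)f(\Psi(t))$, separating variables yields $\int_{\Psi(0)}^\infty d\sigma/f(\sigma)\ge\int_0^s h$, and only then does one take the supremum over $x$ in $\Psi(0)=[e^{-s\mathcal{L}}u_0](x)$. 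The point is that the auxiliary semigroup $e^{-(s-t)\mathcal{L}}$ realigns the convolutions so that the hypothesis $f(e^{-\tau\mathcal{L}}w)\le e^{-\tau\mathcal{L}}f(w)$ produces a \emph{lower} bound pointwise, which is what your $L^\infty$ reduction could not achieve. Your secondary technical remarks (finiteness of the Osgood integral, strict positivity of $e^{-\tau\mathcal{L}}u_0$) are fine, but they sit downstream of the step that fails.
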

\begin{proof} From Definition \ref{DEF1} and \eqref{EQ2}, we have
$$u(t)=e^{-t\mathcal{L}}u_0+\int_{0}^{t} e^{-(t-\sigma)\mathcal{L}}\, h(\sigma)\,f( u(\sigma))\, d\sigma = \mathcal{F}[u](t),  \, \mbox{ for all } \, t \in (0,T).$$
Let $0<t<s$. From $f(e^{-\sigma\mathcal{L}}w_0)\leq e^{-\sigma\mathcal{L}}f(w_0)$ (for all $0\leq w_0 \in \mathcal{X}$), and semigroup properties,  we have 
\begin{equation}\label{eqfuntion}
     e^{-(s-t)\mathcal{L}}u(t) = e^{-(s-t) \mathcal{L}}\mathcal{F}[u](t)\geq \Psi(t) :=e^{-s\mathcal{L}}u_0+\int_{0}^{t}h(\sigma)\,f(e^{-(s-\sigma)\mathcal{L}}u(\sigma))\, d\sigma.
\end{equation}
Note that $\Psi(t)$ is absolutely continuous on $[0,s]$ (with $x$ fixed), which implies that it is differentiable almost everywhere. Hence, since $f$ is nondecreasing, from (\ref{eqfuntion}) we have $\Psi'(t)= h(t)\,f(e^{-(s-t)\mathcal{L}} u(t)) \geq h(t)\,f(\Psi(t))$. This and \eqref{Osgood} imply that
\begin{align*}
\int_{\Psi(0)}^{\infty} \frac{d\sigma}{f(\sigma)} \geq \int_{\Psi(0)}^{\Psi(s)}\frac{d\sigma}{f(\sigma)}
=\int_{\Psi(0)}^{\infty}\frac{d\sigma}{f(\sigma)}-\int_{\Psi(s)}^{\infty}\frac{d\sigma}{f(\sigma)} =-\int_{0}^{s}\left(\int_{\Psi(t)}^{\infty} \frac{d\sigma}{f(\sigma)} \right)'\, dt =\int_{0}^{s} \frac{\Psi'(t)}{f(\Psi(t))} \, dt \geq \int_{0}^{s} h(t)\,dt,
\end{align*}    
for all $s \in (0,T)$. Thus,  as $\int_{0}^{s} h(t)\,dt$ does not depend on $x$, we obtain $\int_{\|e^{-s \mathcal{L}}u_0\|_{\infty}}^{\infty} \frac{d\sigma}{f(\sigma)} \geq \int_{0}^{s} h(t)\,dt$ for all $s \in (0,T)$. 
\end{proof}
\section{Main results}\label{main-res}
\begin{theorem}\label{teo1} Suppose that $h \in C([0,\infty))$, $f\in C([0,\infty))$ is a locally Lipschitz function such that $f(0)=0$.
\begin{itemize}
    \item[(i)] If $f, f(s)/s$ are nondecreasing in some interval $(0, m]$, and there exists $0 \leq v_0 \in \mathcal{X}$, $v_0 \neq 0$ satisfying $\|v_0\|_\infty \leq m$ and the condition
    \begin{equation} \label{Globalcond}   \int_0^{\infty}h(\sigma) \frac{f(\|e^{-\sigma \mathcal{L}}v_0\|_\infty)}{\|e^{-\sigma \mathcal{L}}v_0\|_\infty} \, d\sigma 
        < 1.
    \end{equation}
    Then there exists $\delta > 0$, such that for $\delta v_0 = u_0$, the solution of (\ref{Eqgeral-1}) is a nontrivial global mild solution.

    \item[(ii)] Let $0 \leq u_0 \in \mathcal{X}$, $u_0 \neq 0$ and suppose that $f$ is nondecreasing function such that $0<f(s)$, for all $s>0$, condition \eqref{Osgood} holds, and that it satisfies $f(e^{-t\mathcal{L}}w_0)\leq e^{-t\mathcal{L}}f(w_0)$ for all $t>0$ and $0\leq w_0 \in \mathcal{X}$. If exists $\kappa>0$ such that
\begin{equation}\label{condteo2} \int_{\|e^{-\kappa \mathcal{L}}u_0\|_\infty}^{\infty} \frac{d\sigma}{f(\sigma)} \leq \int_0^{\kappa} h(\sigma) \, d\sigma,
        \end{equation}
    then the solution $u$ of problem \eqref{Eqgeral-1} with initial condition $u_0$ is nonglobal mild solution.
\end{itemize}
\end{theorem}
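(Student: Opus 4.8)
\textbf{Proof plan for Theorem \ref{teo1}.}

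\emph{Part (i): global existence.} The plan is to exhibit a global supersolution of the form $\overline{w}(t) = (1+\gamma)\, e^{-t\mathcal{L}} u_0$ with $u_0 = \delta v_0$ and then invoke Lemma \ref{L2}. First I would reduce the problem to a scalar inequality: since $f(s)/s$ is nondecreasing on $(0,m]$ and, by Remark \ref{Upper1}, $\|e^{-t\mathcal{L}}u_0\|_\infty \le \|u_0\|_\infty = \delta\|v_0\|_\infty \le \delta m \le m$ for $\delta \le 1$, we can control $f\big((1+\gamma)e^{-t\mathcal{L}}u_0\big)$ pointwise. Using that $f$ is nondecreasing together with the semigroup's positivity and the submultiplicative-type bound $f(\lambda a) \le \lambda (f(a)/a) a$-style estimate (more precisely, $f((1+\gamma)e^{-\sigma\mathcal{L}}u_0) \le (1+\gamma)\frac{f(\|e^{-\sigma\mathcal{L}}u_0\|_\infty)}{\|e^{-\sigma\mathcal{L}}u_0\|_\infty} e^{-\sigma\mathcal{L}}u_0$, obtained by combining monotonicity of $f(s)/s$ with $e^{-\sigma\mathcal{L}}u_0 \le \|e^{-\sigma\mathcal{L}}u_0\|_\infty$), I would bound
\[
\int_0^t e^{-(t-\sigma)\mathcal{L}} h(\sigma) f(\overline{w}(\sigma))\, d\sigma \le (1+\gamma)\left(\int_0^\infty h(\sigma)\frac{f(\|e^{-\sigma\mathcal{L}}u_0\|_\infty)}{\|e^{-\sigma\mathcal{L}}u_0\|_\infty}\, d\sigma\right) e^{-t\mathcal{L}}u_0.
\]
The remaining task is to choose $\delta$ and $\gamma$: since $\|e^{-\sigma\mathcal{L}}(\delta v_0)\|_\infty = \delta\|e^{-\sigma\mathcal{L}}v_0\|_\infty$ and $f(s)/s$ is nondecreasing, the integral for $u_0 = \delta v_0$ is bounded by the integral for $v_0$, which by \eqref{Globalcond} is $<1$; pick $\gamma>0$ so small that $(1+\gamma)$ times that integral is still $\le \gamma/(1+\gamma)$, equivalently $\mathcal{F}[\overline w](t) \le e^{-t\mathcal{L}}u_0 + \gamma e^{-t\mathcal{L}}u_0 = \overline w(t)$. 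Then $\overline w$ is a global supersolution; Lemma \ref{L2} (with $f$ nondecreasing on the relevant range, which holds since all iterates stay below $m$) yields a nontrivial global mild solution. The main technical point is justifying that the comparison argument of Lemma \ref{L2} only sees values of $u$ in $(0,m]$ where $f$ is nondecreasing — this should follow because the constructed solution is squeezed between $0$ and $\overline w \le (1+\gamma)\|u_0\|_\infty$, so one shrinks $\delta$ further if needed so that $(1+\gamma)\delta\|v_0\|_\infty \le m$.

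\emph{Part (ii): nonglobal existence.} Here I would argue by contradiction: suppose the mild solution $u$ with initial datum $u_0$ is global, i.e. $T = +\infty$. Then Proposition \ref{Prop1} applies on all of $(0,\infty)$ and gives
\[
\int_0^\tau h(\sigma)\, d\sigma \le \int_{\|e^{-\tau\mathcal{L}}u_0\|_\infty}^\infty \frac{d\sigma}{f(\sigma)}, \qquad \text{for every } \tau \in (0,\infty),
\]
where I use that $\left(\int_{\|e^{-\tau\mathcal{L}}u_0\|_\infty}^\infty d\sigma/f(\sigma)\right)$ is finite by \eqref{Osgood}. Taking $\tau = \kappa$ directly contradicts the strict/weak reversal in hypothesis \eqref{condteo2}, unless both sides are equal; to rule out the borderline equality I would note that \eqref{condteo2} combined with the Proposition forces $\int_0^\kappa h = \int_{\|e^{-\kappa\mathcal{L}}u_0\|_\infty}^\infty d\sigma/f(\sigma)$, and then re-run Proposition \ref{Prop1}'s differential inequality $\Psi'(t) \ge h(t) f(\Psi(t))$ on a slightly larger interval — since $u$ is assumed global, $\Psi$ and hence the solution cannot satisfy this with equality sustained past $\kappa$ without $\int_{\Psi(t)}^\infty d\sigma/f(\sigma)$ hitting zero in finite time, i.e. $\Psi$ (and a fortiori $\|u(t)\|_\infty \ge \|e^{-(s-t)\mathcal{L}}u(t)\|$-type lower bound) blowing up. The cleanest route is: if $\int_0^\kappa h \ge \int_{\|e^{-\kappa\mathcal{L}}u_0\|_\infty}^\infty d\sigma/f(\sigma)$ then for $s$ slightly bigger than $\kappa$ we get $\int_0^s h > \int_{\|e^{-s\mathcal{L}}u_0\|_\infty}^\infty d\sigma/f(\sigma)$ (the left side increases, the right side does not decrease to compensate because $\|e^{-s\mathcal{L}}u_0\|_\infty$ only decreases, \emph{increasing} the right integral — wait, this needs care), which violates Proposition \ref{Prop1}.

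\emph{Anticipated main obstacle.} The delicate monotonicity bookkeeping in part (ii) is where I expect the real difficulty: as $\tau$ grows, $\|e^{-\tau\mathcal{L}}u_0\|_\infty$ decreases, so $\int_{\|e^{-\tau\mathcal{L}}u_0\|_\infty}^\infty d\sigma/f(\sigma)$ \emph{increases}, meaning the bound from Proposition \ref{Prop1} weakens and a naive ``take $\tau$ large'' argument fails. The correct contradiction must instead use that $h \ge 0$ makes $\int_0^\tau h$ nondecreasing while exploiting that if the solution were global the Osgood-type inequality would have to persist for \emph{all} $\tau$; one shows that \eqref{condteo2} at the single time $\kappa$ already saturates the constraint, and then a strict-inequality propagation (using continuity of $\tau \mapsto \int_0^\tau h$ and $\tau \mapsto \|e^{-\tau\mathcal{L}}u_0\|_\infty$, plus the fact that the map $z \mapsto \int_z^\infty d\sigma/f(\sigma)$ is continuous and strictly decreasing) produces a time at which Proposition \ref{Prop1} is violated — hence $T < \infty$. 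I would make sure to state precisely how \eqref{condteo2} is used (it is the hypothesis that $u_0$ is ``large enough'') and double-check the direction of every inequality against the monotonicity of $f$, $f(s)/s$, and the decay of the semigroup, since a single sign error collapses the argument.
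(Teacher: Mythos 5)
Your overall strategy coincides with the paper's: for part (i), exhibit a global supersolution of the form $\overline w(t)=(1+\gamma)e^{-t\mathcal{L}}u_0$ with $u_0=\delta v_0$ and conclude via Lemma \ref{L2}; for part (ii), argue by contradiction using Proposition \ref{Prop1}. Two remarks.

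For part (i), the mechanism you describe is the one the paper uses, but the quantifier on $\gamma$ is reversed. You need $\mathcal{F}[\overline w](t)\le e^{-t\mathcal{L}}u_0+(1+\gamma)\Big(\int_0^\infty h\,\frac{f(\|e^{-\sigma\mathcal{L}}v_0\|_\infty)}{\|e^{-\sigma\mathcal{L}}v_0\|_\infty}\,d\sigma\Big)e^{-t\mathcal{L}}u_0\le(1+\gamma)e^{-t\mathcal{L}}u_0$, which amounts to the integral being $\le\gamma/(1+\gamma)$. Since $\gamma/(1+\gamma)\to 1$ as $\gamma\to\infty$, condition \eqref{Globalcond} lets you pick $\gamma$ \emph{large} enough, not small; then $\delta$ is chosen small (so that $(1+\gamma)\delta\le 1$ and $(1+\gamma)\delta\|v_0\|_\infty\le m$), which is exactly what makes the pointwise bound $(1+\gamma)e^{-\sigma\mathcal{L}}u_0\le e^{-\sigma\mathcal{L}}v_0\le\|e^{-\sigma\mathcal{L}}v_0\|_\infty$ (hence the comparison via monotonicity of $f(s)/s$) legitimate. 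Your intermediate inequality written with $\|e^{-\sigma\mathcal{L}}u_0\|_\infty$ inside $f(\cdot)/\cdot$ but with the factor $(1+\gamma)$ outside is not quite correct, since $(1+\gamma)e^{-\sigma\mathcal{L}}u_0$ may exceed $\|e^{-\sigma\mathcal{L}}u_0\|_\infty$; the paper's bookkeeping with $v_0$ avoids this.

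For part (ii) you correctly anticipate the problematic point and honestly flag that your ``propagation'' patch breaks: as $\tau$ grows, $\|e^{-\tau\mathcal{L}}u_0\|_\infty$ decreases, so $\int_{\|e^{-\tau\mathcal{L}}u_0\|_\infty}^\infty d\sigma/f(\sigma)$ increases along with $\int_0^\tau h$, and no contradiction is forced at a later time. The resolution, which is implicit in the paper's proof of Proposition \ref{Prop1}, is that its conclusion is actually a \emph{strict} inequality. In the proof one has
$\int_{\Psi(0)}^\infty\frac{d\sigma}{f(\sigma)}\ge\int_{\Psi(0)}^{\Psi(s)}\frac{d\sigma}{f(\sigma)}+\int_{\Psi(s)}^\infty\frac{d\sigma}{f(\sigma)}$,
and the second term is strictly positive: $\Psi(s)\le u(s)\le\|u(s)\|_\infty<\infty$ because $u\in L^\infty((0,T);\mathcal{X})$, so $\int_{\Psi(s)}^\infty d\sigma/f(\sigma)\ge\int_{\|u(s)\|_\infty}^\infty d\sigma/f(\sigma)>0$, uniformly in $x$. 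Hence after taking the supremum in $x$ one still gets $\int_{\|e^{-\kappa\mathcal{L}}u_0\|_\infty}^\infty d\sigma/f(\sigma)>\int_0^\kappa h$, which flatly contradicts \eqref{condteo2}. Without this observation your argument does not close, so as written there is a genuine gap in (ii) even though the skeleton is the right one.
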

\begin{proof} $(i)$ Let  $0<\delta \leq \min \left\{ \frac{1}{\beta+1},\frac{m}{(\beta +1)\|v_0\|_{\infty}} \right\}$,
where $\beta$ is chosen so that $\int_0^{\infty}h(\sigma) \frac{f(\|e^{-\sigma \mathcal{L}}v_0\|_\infty)}{\|e^{-\sigma\mathcal{L}}v_0\|_\infty} \, d\sigma 
        < \frac{\beta}{\beta+1}$ (this is possible due to condition \eqref{Globalcond}). We claim that  \(\overline{w}(t)=(1+\beta)e^{-t \mathcal{L}}u_0\) is a global supersolution of problem \eqref{Eqgeral-1}, with $u_0=\delta v_0$. Indeed, given that $f$  and  $f(s)/s$ are nondecreasing in $(0,m]$, together with Remark \ref{Upper1}, the semigroup properties, and the fact that $ (1+\beta)e^{-t \mathcal{L}}u_0 \leq e^{-t \mathcal{L}}v_0 \leq m$ (see Remark \ref{Upper1}), we conclude that
\begin{align*}
\mathcal{F}[\overline{w}](t) & = e^{-t\mathcal{L}}u_0+\int_{0}^{t} e^{-(t-\sigma)\mathcal{L}}\, [h(\sigma)\,f((1+\beta)e^{-\sigma \mathcal{L}}u_0)]\, d\sigma  \\
& \leq e^{-t\mathcal{L}}u_0+\int_{0}^{t} h(\sigma)\,e^{-(t-\sigma)\mathcal{L}}\,\frac{f((1+\beta)e^{-\sigma \mathcal{L}}u_0)}{(1+\beta)e^{-\sigma \mathcal{L}}u_0}\,[(1+\beta)e^{-\sigma \mathcal{L}}u_0]\, d\sigma \\
& \leq e^{-t\mathcal{L}}u_0+[(1+\beta)e^{-t \mathcal{L}}u_0] \int_{0}^{t} h(\sigma)\frac{f(\|e^{-\sigma \mathcal{L}}v_0\|_\infty)}{\|e^{-\sigma \mathcal{L}}v_0\|_\infty}\,\, d\sigma \\
& \leq e^{-t\mathcal{L}}u_0+[(1+\beta)e^{-t \mathcal{L}}u_0]\,\frac{\beta}{\beta+1} \leq (1+\beta)e^{-t\mathcal{L}}u_0 = \overline{w}(t).
\end{align*}
Thus, the claim holds. Hence, by Lemma \ref{L2} and Remark \ref{Upper1}, problem \eqref{Eqgeral-1} admits a nontrivial global mild solution. \\
$(ii)$ Suppose that $u$ is a nontrivial global mild solution of problem \eqref{Eqgeral-1}. Then by Proposition \ref{Prop1}, $\int_{\|e^{-\kappa \mathcal{L}}u_0\|_\infty}^{\infty} \frac{d\sigma}{f(\sigma)} \geq \int_0^{\kappa} h(\sigma) \, d\sigma$ for all $\kappa \in (0, +\infty)$. This is clearly a contradiction with condition \eqref{condteo2}.
\end{proof} 


We now derive Fujita-type results for problem \eqref{Eqgeral-1}, focusing on specific functions $f(u)$. First, we address the cases $f(u) = u^p$ and $f(u) = (1 + u)[\ln(u + 1)]^p$. These functions are clearly convex, nonnegative, and nondecreasing in the interval  $(0, +\infty)$ (and therefore locally Lipschitz continuous). Moreover,
$f(0)=0$, and $f(s)/s$ is also nondecreasing.
Additionally, the property $f(e^{-t\mathcal{L}}w_0)\leq e^{-t\mathcal{L}}f(w_0)$ 
 holds due to the convexity of 
$f$ with
$f(0)=0$, and the fact that the semigroup  $e^{-t\mathcal{L}}$ is a submarkovian operator, as established by property  $(A_1)$ (see \cite[Lemma 3.2 and Theorem 3.4]{haase}). Therefore, we can apply Theorem \ref{teo1} to obtain the following result.
 \begin{cor}\label{corap}
 Suppose either $f(u)= (1+u)[\ln(1+u)]^p$ or $f(u)= u^p$ $(p>1)$, and that there exist constants $C_1, \, C_2>0$ such that
 $h \in C([0,\infty)$ satisfies $C_1 t^\rho \leq h(t) \leq C_2 t^\rho$ for some $\rho>0$ and for $t$ sufficiently large.
 \begin{enumerate}[label=(\roman*)]
 \item If $1 < p < 1 + \frac{2s(\rho+1)}{N}$, then problem \eqref{Eqgeral-1} has no nontrivial global mild solutions.
 \item If \(p>1 + \frac{2s(\rho+1)}{N}\)  then  problem \eqref{Eqgeral-1} has a nontrivial global mild solution.  
\end{enumerate}
\end{cor}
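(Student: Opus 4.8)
The plan is to verify the hypotheses of Theorem \ref{teo1} for each of the two source terms, with the whole argument reduced to estimating the quantities $\|e^{-\sigma\mathcal{L}}v_0\|_\infty$ from above and below by powers of $\sigma$, and then checking when the relevant integral converges or diverges. The structural properties of $f$ (convexity, monotonicity of $f$ and $f(s)/s$, $f(0)=0$, the submarkovian inequality $f(e^{-t\mathcal{L}}w_0)\le e^{-t\mathcal{L}}f(w_0)$, and the Osgood-type condition \eqref{Osgood}) have already been checked in the paragraph preceding the corollary, so the only genuine work is the decay analysis of the linear semigroup.

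For part (i), I would take any fixed $0\le u_0\in\mathcal X$ with $u_0\neq0$ and compactly supported near the origin, and use the two-sided bounds: from Remark \ref{Upper1}, $\|e^{-\sigma\mathcal{L}}u_0\|_\infty\le C\sigma^{-N/(2s)}$ for $\sigma\ge1$, while Lemma \ref{Lower2} gives $\|e^{-\sigma\mathcal{L}}u_0\|_\infty \ge c\,\sigma^{-N/(2s)}\int_{|y|\le\frac12\sigma^{1/2}}u_0(y)\,dy \ge c'\sigma^{-N/(2s)}$ for $\sigma$ large (the support integral stabilizes). Then for $f(u)=u^p$ one has $f(r)/r=r^{p-1}$, so $h(\sigma)\dfrac{f(\|e^{-\sigma\mathcal{L}}u_0\|_\infty)}{\|e^{-\sigma\mathcal{L}}u_0\|_\infty}$ is, for large $\sigma$, comparable to $\sigma^{\rho}\cdot\sigma^{-\frac{N}{2s}(p-1)}$; the tail integral $\int^\infty\sigma^{\rho-\frac{N}{2s}(p-1)}\,d\sigma$ converges precisely when $\rho-\frac{N}{2s}(p-1)<-1$, i.e. when $p>1+\frac{2s(\rho+1)}{N}$. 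For $f(u)=(1+u)[\ln(1+u)]^p$, since $\|e^{-\sigma\mathcal{L}}u_0\|_\infty\to0$, we have $\ln(1+r)\sim r$ as $r\to0$, so $f(r)/r=(1+r)\tfrac{[\ln(1+r)]^p}{r}$ behaves like $r^{p-1}$ for small $r$, and the same threshold is obtained. Once the integral over $(0,\infty)$ is finite, one rescales: replacing $u_0$ by $\delta u_0$ scales $\|e^{-\sigma\mathcal{L}}(\delta u_0)\|_\infty$ by $\delta$, and since $\delta\mapsto \int_0^\infty h(\sigma)\frac{f(\delta\|e^{-\sigma\mathcal{L}}u_0\|_\infty)}{\delta\|e^{-\sigma\mathcal{L}}u_0\|_\infty}\,d\sigma$ is monotone and tends to $0$ as $\delta\to0^+$ (by monotone/dominated convergence, using $f(s)/s\to 0$ as $s\to0$), condition \eqref{Globalcond} holds for $\delta$ small with $v_0=u_0$; Theorem \ref{teo1}(i) then yields a nontrivial global solution for $u_0=\delta v_0$.

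For part (i) of the corollary (nonexistence), I would instead apply Theorem \ref{teo1}(ii): it suffices to exhibit $\kappa>0$ with $\int_{\|e^{-\kappa\mathcal{L}}u_0\|_\infty}^{\infty}\frac{d\sigma}{f(\sigma)}\le\int_0^\kappa h(\sigma)\,d\sigma$. For $f(u)=u^p$, $\int_a^\infty\sigma^{-p}\,d\sigma=\frac{a^{1-p}}{p-1}$ with $a=\|e^{-\kappa\mathcal{L}}u_0\|_\infty$, and from Lemma \ref{Lower2} $a\ge c\,\kappa^{-\frac{N}{2s}}\int_{|y|\le\frac12\kappa^{1/2}}u_0(y)\,dy$; for $\kappa$ large the support integral is bounded below by a positive constant, so $a^{1-p}\le C\kappa^{\frac{N}{2s}(p-1)}$, while $\int_0^\kappa h(\sigma)\,d\sigma\ge c\,\kappa^{\rho+1}$ for $\kappa$ large. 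Hence \eqref{condteo2} is satisfied as soon as $\kappa^{\frac{N}{2s}(p-1)}=o(\kappa^{\rho+1})$, i.e. $\frac{N}{2s}(p-1)<\rho+1$, which is exactly $p<1+\frac{2s(\rho+1)}{N}$; choosing $\kappa$ large enough makes the inequality strict-then-satisfied, so the solution is nonglobal. The logarithmic case is analogous: near infinity $\frac{1}{f(\sigma)}=\frac{1}{(1+\sigma)[\ln(1+\sigma)]^p}$, and $\int_a^\infty\frac{d\sigma}{(1+\sigma)[\ln(1+\sigma)]^p}=\frac{[\ln(1+a)]^{1-p}}{p-1}$, with $a\to\infty$ as $\kappa\to\infty$; since $\ln(1+a)$ grows only logarithmically in $\kappa$ while $\|e^{-\kappa\mathcal{L}}u_0\|_\infty$ decays polynomially, one checks $[\ln(1+a)]^{1-p}\le C(\ln\kappa)^{1-p}$ up to constants depending on the decay rate, which is $o(\kappa^{\rho+1})$ for every $p$, so in fact the logarithmic nonlinearity is "more explosive" and \eqref{condteo2} holds for large $\kappa$ throughout the stated range.

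The main obstacle is bookkeeping the precise two-sided control of $\|e^{-\sigma\mathcal{L}}v_0\|_\infty$: the upper bound $C\sigma^{-N/(2s)}$ from Remark \ref{Upper1} is clean, but the matching lower bound requires that the mass of $v_0$ captured by the shrinking-in-relative-scale ball $\{|y|\le\frac12\sigma^{1/2}\}$ does \emph{not} decay—true here since $v_0\in L^1$ and the ball exhausts $\mathbb{R}^N$, so the integral increases to $\|v_0\|_{L^1}>0$—and one must also be careful that $\|e^{-\sigma\mathcal{L}}v_0\|_\infty$ is attained (or comparable to its value) at a point $x$ inside $\{|x|<\frac12\sigma^{1/2}\}$; this is handled by noting $\|e^{-\sigma\mathcal L}v_0\|_\infty\le C\sigma^{-N/(2s)}\|v_0\|_{L^1}$ \emph{pointwise} from \eqref{EQ4}, so the lower bound at the single point $x=0$ already matches the global sup up to constants. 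With these comparisons in hand, every remaining step is an elementary integral of a power (or a logarithm times a power), and the exponent threshold $1+\frac{2s(\rho+1)}{N}$ falls out of the convergence condition $\rho-\frac{N}{2s}(p-1)<-1$. A minor subtlety is the borderline exponents, but these are excluded from the statement, so no separate argument is needed there.
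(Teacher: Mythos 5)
Your overall strategy---applying Theorem~\ref{teo1}(ii) for nonexistence and Theorem~\ref{teo1}(i) with a rescaling $v_0\mapsto\delta v_0$ for global existence, reducing everything to the two-sided estimate $\|e^{-\sigma\mathcal L}v_0\|_\infty\asymp\sigma^{-N/(2s)}$ from Remark~\ref{Upper1} and Lemma~\ref{Lower2}---is exactly the paper's, and your treatment of $f(u)=u^p$ in both directions, as well as the global-existence direction for the logarithmic nonlinearity, is correct. (The paper writes out $f(u)=(1+u)[\ln(1+u)]^p$ explicitly and notes that $u^p$ is similar; it splits the integral at a large $t_0$ instead of invoking monotone convergence in $\delta$, but these are presentational choices.)

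However, your blowup argument for $f(u)=(1+u)[\ln(1+u)]^p$ contains a genuine error. You set $a=\|e^{-\kappa\mathcal L}u_0\|_\infty$ and assert ``$a\to\infty$ as $\kappa\to\infty$''; in fact $a\to 0$ (one has $a\le C\kappa^{-N/(2s)}$ by Remark~\ref{Upper1}, and you rely on exactly this decay two clauses earlier). With $a\to0^+$ and $1-p<0$, the quantity $[\ln(1+a)]^{1-p}$ does not behave like $(\ln\kappa)^{1-p}$; since $\ln(1+a)\sim a$ as $a\to0^+$, it grows polynomially: $[\ln(1+a)]^{1-p}\asymp a^{1-p}\le C\kappa^{N(p-1)/(2s)}$, using the lower bound $a\ge c\kappa^{-N/(2s)}$ from Lemma~\ref{Lower2}. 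Your conclusion that condition~\eqref{condteo2} ``holds for large $\kappa$ throughout the stated range'' for every $p>1$ is therefore false, and would in fact contradict part (ii) of the corollary. The correct comparison is the same polynomial one as for $u^p$: \eqref{condteo2} reduces to $C\kappa^{N(p-1)/(2s)}\le c\,\kappa^{\rho+1}$ for some large $\kappa$, which holds precisely when $\frac{N}{2s}(p-1)<\rho+1$, i.e.\ $p<1+\frac{2s(\rho+1)}{N}$. Once corrected, your argument coincides with the paper's.
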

\begin{proof} We only treat the case $f(u) = (1+u)[\ln(1+u)]^p$, as the case $f(u) = u^p$ can be proved similarly.\\
$(i)$ Suppose $0 \leq u_0 \in \mathcal{X}$ and $u_0 \neq 0$. From Lemma \ref{Lower2} we obtain that $\|e^{-t\mathcal{L}}u_0\|_\infty \geq c\,t^{-\frac{N}{2s}}$ for all $t$ large enough. Thus, since $1<p< 1 + \frac{2s(\rho+1)}{N}$, we have
\begin{eqnarray*}
    \left[ \int_{\|e^{-t\mathcal{L}}u_0\|_\infty}^\infty \frac{d\sigma}{f(\sigma)} \right]^{-1} \int_0^t h(\sigma) d\sigma &\geq& C_1(p - 1)[\ln(1 + \|e^{-t\mathcal{L}}u_0\|_\infty)]^{p - 1} \cdot \int_{\frac{t}{2}}^{t} \sigma^\rho d\sigma\\
&\geq& C \cdot \left[\ln(1 + c \cdot t^{-\frac{N}{2s}}) \cdot t^{\frac{\rho+1}{p - 1}}\right]^{p - 1}  \geq C \cdot \left[c \cdot t^{-\frac{N}{2s}} \cdot t^{\frac{\rho+1}{p - 1}}\right]^{p - 1} > 1,
\end{eqnarray*}
for $t$ sufficiently large. Hence, condition \eqref{condteo2} is satisfied, and the result follows from Theorem \ref{teo1} $(ii)$.\\
$(ii)$  Let $0 \leq v_0 \in \mathcal{X}$, with $v_0 \neq 0$, which will be chosen sufficiently small. By assumption, there exists $t_1 >0$ such that $h(\sigma) \leq C_2 \sigma^\rho$ for all $\sigma \in [t_1, \infty)$.
Now let $t_0 >t_1$ (which will be chosen sufficiently large). Using the inequality $\ln(1+z)\leq z$ (for $z>0$), the fact that $f(s)/s$ is nondecreasing, and Remark \ref{Upper1}, we obtain
\begin{align*}
 \int_0^{\infty}h(\sigma) \frac{f(\|e^{-\sigma \mathcal{L}}v_0\|_\infty)}{\|e^{-\sigma\mathcal{L}}v_0\|_\infty} &\, d\sigma 
    = \int_0^{t_0}h(\sigma) \frac{f(\|e^{-\sigma \mathcal{L}}v_0\|_\infty)}{\|e^{-\sigma\mathcal{L}}v_0\|_\infty} \, d\sigma +
\int_{t_0}^{\infty}h(\sigma) \frac{f(\|e^{-\sigma \mathcal{L}}v_0\|_\infty)}{\|e^{-\sigma\mathcal{L}}v_0\|_\infty} \, d\sigma \\
    &= \frac{f(\| v_0\|_\infty)}{\|v_0\|_\infty} \int_0^{t_0} h(\sigma) \, d\sigma + \int_{t_0}^{\infty}h(\sigma) \frac{(1+\|e^{-\sigma \mathcal{L}}v_0\|_\infty)[\ln(1+\|e^{-\sigma \mathcal{L}}v_0\|_\infty)]^p}{\|e^{-\sigma \mathcal{L}}v_0\|_\infty} \, d\sigma \\
    & \leq \frac{f(\| v_0\|_\infty)}{\|v_0\|_\infty} \int_0^{t_0} h(\sigma) \, d\sigma + (1+\|v_0\|_\infty) \int_{t_0}^{\infty}h(\sigma)\|e^{-\sigma \mathcal{L}}v_0\|_\infty^{p-1} \, d\sigma \\
    & \leq \frac{f(\| v_0\|_\infty)}{\|v_0\|_\infty} \int_0^{t_0} h(\sigma) \, d\sigma + C \int_{t_0}^{\infty}\sigma^{\rho-\frac{N}{2s}(p-1)} \, d\sigma \\
   & = \frac{f(\| v_0\|_\infty)}{\|v_0\|_\infty} \int_0^{t_0} h(\sigma) \, d\sigma  + C \frac{t_0^{\rho+1-\frac{N}{2s}(p-1)}}{\frac{N}{2s}(p-1)-(\rho+1)}.
\end{align*}
Since $ \frac{f(s)}{s} = \frac{(1+s)[\ln(1+s)]^p}{s} \to 0 $ as $s \to 0^+$, the result follows from Theorem \ref{teo1} $(i)$ by first choosing \( t_0 \) sufficiently large and then \( \|v_0\|_\infty \) sufficiently small such that
$\int_0^{\infty} h(\sigma) \frac{f(\|e^{-\sigma \mathcal{L}}v_0\|_\infty)}{\|e^{-\sigma \mathcal{L}}v_0\|_\infty} \, d\sigma < 1$.
\end{proof}
Finally, we analyze the case $f(u) = [\ln(u + 1)]^p/(1 + u)$:
 \begin{cor} \label{corap2}
 Suppose that \(f(u)= [\ln(1+u)]^p/(1+u)\) \((p>1)\) and $h=1$.
 \begin{enumerate}[label=(\roman*)]
 \item If $1 < p \leq 1 + \frac{2s}{N}$, then problem \eqref{Eqgeral-1} has no nontrivial global mild solutions.

 \item If $p>1 + \frac{2s}{N}$  then  problem \eqref{Eqgeral-1} has a nontrivial global mild solution.  
\end{enumerate}
\end{cor}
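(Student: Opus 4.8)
The plan is to reduce both assertions to the power nonlinearity, exploiting that $f(s)\sim s^p$ as $s\to0^+$. For part (i) I argue by contradiction: suppose \eqref{Eqgeral-1} has a nontrivial global mild solution $u$; by Definition \ref{DEF1} with $T=+\infty$ one has $u\in L^\infty((0,\infty);\mathcal X)$, so $M:=\sup_{t>0}\|u(t)\|_\infty<\infty$ (this uniform bound is indispensable, and is what rescues the statement from vacuity, since here $f$ is bounded so every solution is defined for all $t$). Because $f(s)/s^p=\big(\ln(1+s)/s\big)^p/(1+s)\to1$ as $s\to0^+$, the map $s\mapsto f(s)/s^p$ is continuous and strictly positive on $[0,M]$, whence $c_M:=\min_{[0,M]}f(s)/s^p>0$ and $f(s)\ge c_M s^p$ on $[0,M]$. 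Since $h\equiv1$ and $0\le u(\sigma)\le M$, the Duhamel formula and the positivity of the semigroup yield $u(t)\ge e^{-t\mathcal L}u_0+c_M\int_0^t e^{-(t-\sigma)\mathcal L}u(\sigma)^p\,d\sigma$; that is, $u$ is a global supersolution (Definition \ref{Supersolution}) of the power problem $v_t+\mathcal L v=c_M v^p$, $v(0)=u_0$. By Lemma \ref{L2} (with $T=+\infty$) this problem then admits a nontrivial global mild solution $v$. But the Fujita exponent of $v_t+\mathcal L v=c\,v^p$ equals $1+2s/N$ for every $c>0$ (the constant is removed by the similarity transformation $v\mapsto c^{1/(p-1)}v$, which preserves $\mathcal X$), and for $1<p\le1+2s/N$ this problem has no nontrivial global mild solution: when $1<p<1+2s/N$ this is Corollary \ref{corap}(i) applied with $f(u)=u^p$, $h\equiv1$ (so $\rho=0$), and when $p=1+2s/N$ it is the sharp nonexistence result of \cite{Biagi1} (see also \cite{Pezzo1}). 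This contradiction proves (i).

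For part (ii) I verify the hypotheses of Theorem \ref{teo1}(i). The function $f$ is $C^1$ on $[0,\infty)$ with $f(0)=0$ (hence locally Lipschitz), $f'(s)=[\ln(1+s)]^{p-1}\big(p-\ln(1+s)\big)(1+s)^{-2}>0$ on $(0,e^p-1)$, and an expansion near $0$ (using $\ln(1+s)=s-\tfrac{s^2}{2}+\cdots$) shows that $s\mapsto f(s)/s$ is nondecreasing on some small interval $(0,m]$. As $f$ is not globally monotone, I replace it by $\tilde f(s):=f(s\wedge m)$, which is nondecreasing on $[0,\infty)$, agrees with $f$ on $[0,m]$, and keeps $\tilde f(s)/s$ nondecreasing on $(0,m]$; Theorem \ref{teo1}(i) applied to $\tilde f$ produces a nontrivial global solution dominated by $(1+\beta)e^{-t\mathcal L}u_0\le e^{-t\mathcal L}v_0\le\|v_0\|_\infty\le m$, on which $\tilde f=f$, so it is also a mild solution of \eqref{Eqgeral-1}, and by uniqueness it is \emph{the} solution of \eqref{Eqgeral-1} with that datum, now global. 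It remains to exhibit $v_0$ with $\|v_0\|_\infty\le m$ satisfying \eqref{Globalcond}. Fix any $0\le w_0\in\mathcal X$, $w_0\ne0$, and set $v_0=\varepsilon w_0$. Using the elementary bound $f(z)/z\le z^{p-1}$ (from $\ln(1+z)\le z$) together with $\|e^{-\sigma\mathcal L}w_0\|_\infty\le\|w_0\|_\infty$ and $\|e^{-\sigma\mathcal L}w_0\|_\infty\le C\sigma^{-N/(2s)}$ (Remark \ref{Upper1}), one gets
\[
\int_0^\infty\frac{f(\|e^{-\sigma\mathcal L}v_0\|_\infty)}{\|e^{-\sigma\mathcal L}v_0\|_\infty}\,d\sigma\ \le\ \varepsilon^{\,p-1}\int_0^\infty\|e^{-\sigma\mathcal L}w_0\|_\infty^{\,p-1}\,d\sigma\ \le\ \varepsilon^{\,p-1}\Big(\|w_0\|_\infty^{p-1}+C^{p-1}\int_1^\infty\sigma^{-\frac{N(p-1)}{2s}}\,d\sigma\Big),
\]
and the last integral is finite precisely because $p>1+2s/N$. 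Choosing $\varepsilon$ small enough makes the left-hand side $<1$ while keeping $\|v_0\|_\infty\le m$, so \eqref{Globalcond} holds and Theorem \ref{teo1}(i) furnishes a nontrivial global mild solution for a suitable $\delta v_0$.

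The main obstacle is the critical exponent $p=1+2s/N$ in part (i). There $f$ violates the Osgood condition \eqref{Osgood} (indeed $\int^\infty(1+\sigma)[\ln(1+\sigma)]^{-p}\,d\sigma=\infty$), so neither Theorem \ref{teo1}(ii) nor Proposition \ref{Prop1} can be invoked for $f$ itself, and the comparison with $c_M v^p$ processed through the machinery of this paper reaches only the strict inequality $p<1+2s/N$; closing the equality case rests on importing the sharp critical-exponent nonexistence for $w_t+\mathcal L w=w^p$ from \cite{Biagi1} and \cite{Pezzo1}. I would therefore be careful that the reduction via Lemma \ref{L2} genuinely delivers a global \emph{solution} (not merely a supersolution) of the power problem, and that the rescaling removing the constant $c_M$ is admissible within $\mathcal X$.
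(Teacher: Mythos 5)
Your proof follows essentially the same route as the paper: for (i), bound $f(s)\ge c_M s^p$ on $[0,M]$ (the paper does this via $\ln(1+z)\ge z/(1+z)$, giving the explicit constant $(1+M_0)^{-(p+1)}$), conclude $u$ is a global supersolution of the pure-power problem, pass through Lemma \ref{L2}, and contradict the known nonexistence for $v_t+\mathcal L v = c\,v^p$ with $1<p\le1+2s/N$ — where the paper simply cites \cite[Theorem~1.3]{Pezzo1} for the whole range, rather than your two-case split in which the subcritical case leans on Corollary~\ref{corap}(i) with $\rho=0$, strictly outside that corollary's stated hypothesis $\rho>0$ (cite \cite{Pezzo1} or \cite{Biagi1} directly instead). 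For (ii) the verification of Theorem~\ref{teo1}(i) with a small datum is what the paper means by ``similar to Corollary~\ref{corap}(ii)''; your truncation $\tilde f(s)=f(s\wedge m)$ just makes explicit a step the paper leaves implicit in invoking Lemma~\ref{L2} for an $f$ that is only locally monotone.
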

\begin{proof} Noting that in this case both $f(s)$ and 
$f(s)/s$ are nondecreasing on an interval $(0,m)$, $m>0$, item $(ii)$ can be proved in a similar manner to the proof of Corollary \ref{corap} $(ii)$.\\
$(i)$ Here we cannot directly use Theorem \ref{teo1} since $f$ is not convex, and thus we cannot guarantee that  $f(e^{-t\mathcal{L}}w_0)\leq e^{-t\mathcal{L}}f(w_0)$.  Therefore, we will adopt a proof based on a comparison criterion. Suppose that there exists \(u \in L^{\infty}((0,\infty),\mathcal{X})\) a nontrivial global solution to the problem (\ref{Eqgeral-1}) and let \(M_0 \geq \|u\|_\infty\). From the inequality
$\ln(z+1) \geq z/(z+1) \, (z >0)$, we have
\begin{eqnarray*}
    u(t)  = e^{-t\mathcal{L}}u_0+\int_{0}^{t} e^{-(t-\sigma)\mathcal{L}} \frac{[\ln(1+u(\sigma))]^p}{1+u(\sigma)} d\sigma & \geq& e^{-t\mathcal{L}}u_0+\int_{0}^{t} e^{-(t-\sigma)\mathcal{L}} \frac{u^p(\sigma)}{[1+\|u(\sigma)\|_\infty]^{p+1}} d\sigma
    \\
    &\geq& e^{-t\mathcal{L}}u_0+\int_{0}^{t} e^{-(t-\sigma)\mathcal{L}} \frac{u^p(\sigma)}{[1+M_0]^{p+1}} d\sigma.
\end{eqnarray*}
Thus, $u$ is a global supersolution of problem (see Definition \ref{Supersolution})
\begin{equation}\label{Prolemsupsol}
	\left\{ 
	\begin{array}{rll}
		U_t + \mathcal{L} U &= \dfrac{1}{[1+M_0]^{p+1}} U^p & \mbox{ in } \mathbb{R}^N \times (0,T), \\	
		{U}(0) &=  {U}_{0} >0& \mbox{ in } \mathbb{R}^N. \\
	\end{array}
	\right.
\end{equation}
Then, (\ref{Prolemsupsol}) admits a nontrivial mild solution in $\mathbb{R}^N \times [0,T)$. However, this contradicts the established result in \cite[Theorem 1.3]{Pezzo1}, which states that for $1 < p \leq 1 + 2s/N$ the problem \eqref{Prolemsupsol} has no nontrivial global mild solutions.
\end{proof}
~~\\
\textbf{Acknowledgements}\\
R. Castillo is supported by ANID-FONDECYT project No. 11220152. A. Lira is supported by the ANID doctoral scholarship number 21241184. M. Loayza is supported by CNPq - 313382/2023-9.\\
\\
\textbf{Data availability}\\ Data sharing does not apply to this article since no datasets were generated or analyzed during the current study.

\end{document}